\newtheorem{theorem}{Theorem}[section]
\newtheorem{corollary}[theorem]{Corollary}
\newtheorem{lemma}[theorem]{Lemma}
\newtheorem{proposition}[theorem]{Proposition}
\theoremstyle{definition}
\newtheorem{remark}[theorem]{Remark}
\newtheorem{example}[theorem]{Example}
\theoremstyle{parrafo}
\begin{document}

\title[]{Behavior of weak type bounds for high dimensional maximal operators defined by certain radial  measures}

\author{J. M. Aldaz and J. P\'erez L\'azaro}
\address{Departamento de Matem\'aticas,
Universidad  Aut\'onoma de Madrid, Cantoblanco 28049, Madrid, Spain.}
\email{jesus.munarriz@uam.es}
\address{Departamento de Matem\'aticas y Computaci\'on,
Universidad  de La Rioja, 26004 Logro\~no, La Rioja, Spain.}
\email{javier.perezl@unirioja.es}

\thanks{2000 {\em Mathematical Subject Classification.} 42B25}

\thanks{The authors were partially supported by Grant MTM2009-12740-C03-03 of the
D.G.I. of Spain}







\begin{abstract} As shown in \cite{A1}, the lowest constants appearing
in the weak type $(1,1)$ inequalities
satisfied by the centered Hardy-Littlewood maximal operator
associated to  certain finite radial measures,
grow exponentially fast with the dimension.
Here
we extend this result to a wider class of radial measures and to
 some values of $p > 1$. Furthermore, we improve the previously
known bounds
for $p =1$.
 Roughly speaking, whenever $p\in (1, 1.03]$,
if $\mu$ is
defined by a  radial, radially decreasing density satisfying some mild
growth conditions, then the best constants $c_{p,d,\mu}$ in the
weak type $(p,p)$ inequalities satisfy $c_{p,d,\mu} \ge 1.005^d$
for all $d$ sufficiently large. We also show that exponential
increase of the best constants occurs for certain families of
doubling measures, and for arbitrarily high values of $p$.
\end{abstract}


\maketitle


\section {Introduction}

\markboth{J. M. Aldaz and J. P\'erez-L\'azaro}{Behavior of weak type bounds}

Given a Borel measure $\mu$ on $\mathbb R^d$ and a locally integrable
function $g$, the Hardy-Littlewood maximal operator $M_{\mu}$ is given by
\begin{equation}\label{HLMF}
M_{\mu} g(x) := \sup _{\{r > 0: \mu (B(x, r)) > 0\}} \frac{1}{\mu
(B(x, r))} \int _{B(x, r)} \vert g\vert d\mu,
\end{equation}
where $B(x, r)$ denotes the euclidean {\it closed} ball of radius
$r > 0$ centered at $x$.  As is well known, $M_\mu$ is a positive, sublinear operator, acting  on
the cone of positive, locally integrable functions
($M_\mu$ is defined by using $|g|$ rather than $g$). The Hardy-Littlewood maximal operator admits many variants: Instead of
 averaging $|g|$ over balls centered at
$x$ (the centered operator) as in (\ref{HLMF}), it is possible to consider all balls containing $x$ (the uncentered
operator)  or average over  convex bodies more general than
euclidean balls (and even over more
general sets). And as part of the current effort to develop a
calculus on metric spaces, the Hardy-Littlewood maximal operator
has been studied in settings far more general than $\mathbb{R}^d$.
Here we work  with the {\em centered} operator defined
using {\em euclidean} balls in $\mathbb{R}^d$, associated to certain radial measures $\mu$ given by
$\mu (A) := \int_A  f(\|y\|_2)  d\lambda^{d} (y)$, where $f: (0,\infty ) \to [0,\infty )$ is nonincreasing
(possibly unbounded) and not zero almost everywhere, and $f(t) t^{d-1}\in L^1_{\operatorname{loc}}((0,\infty), dt)$. We emphasize
 that the  function
 $f$ defining $\mu$ is allowed to vary with the dimension $d$. Additional hypotheses,  regarding the growth at $0$ of $f$ and its decay
 at $\infty$,  are given  below.

The Hardy-Littlewood maximal operator is  an often used tool
 in  Real and Harmonic Analysis,
mainly (but not exclusively) due to the  fact that while
$|g|\le M_\mu g$ a.e.,
  $M_\mu g$  is
 not too large (in an $L^p$ sense)  since it satisfies the following strong type $(p,p)$ inequality: $\|M_\mu g\|_p \le C_p \|g\|_p$ for  $1 < p \le \infty$.
 For $p= 1$, $M_\mu$ satisfies instead the weak type $(1,1)$ inequality
 $\sup_{\alpha > 0}\alpha \mu (\{M_\mu g \ge \alpha\}) \le c_1 \|g\|_1$.
 Another aspect of the maximal operator that is receiving increasing
 attention, but not touched upon here, is that of its regularity properties, cf. for instance
 \cite{AlPe1},  \cite{AlPe2},  \cite{AlPe3} and the references contained
 therein.
When  $\mu = \lambda^d$, the  $d$-dimensional Lebesgue measure,
we often simplify notation, by writing $M$ rather than $M_{\lambda^d}$
and $dx$ instead of $d \lambda^d (x)$.

 Considerable efforts
have gone into determining how changing the dimension of $\mathbb{R}^d$ modifies the
best constants appearing in the weak and strong type inequalities.
When $p =\infty$, we can take $C_p = 1$ in every dimension $d$, since
averages never exceed a supremum. Quite remarkably,
E. M. Stein showed that for $M$,
there exist bounds for $C_p$ that are independent of $d$ (\cite{St1}, \cite{St2},
\cite{StSt}, see also \cite{St3}).
Stein's result was
generalized to the maximal function defined using an arbitrary norm by J. Bourgain (\cite{Bou1},
\cite{Bou2}, \cite{Bou3}) and A. Carbery (\cite{Ca}) when $p>3/2$.
For $\ell_q$ balls, $1\le q <\infty$, D. M\"{u}ller \cite{Mu}
showed that uniform bounds again hold for every $p
> 1$ (given $1\le q <\infty$, the $\ell_q$ balls
are defined using the norm $\|x\|_q :=\left( x_1^q+ x_2^q+\dots + x_d^q\right)^{1/q}$). It is still an open question whether the
maximal operator associated to cubes and Lebesgue measure is uniformly bounded
for $1 < p \le 3/2$.

When $p=1$, the maximal operator is (typically) unbounded, so
one considers weak type $(1,1)$ inequalities instead. In \cite{StSt} E. M. Stein
and J. O. Str\"{o}mberg proved that the smallest
constants in the weak type (1,1) inequality satisfied by $M$ grow
at most like $O(d)$ for euclidean balls, and at most like $O( d\log d)$
for more general balls. They also asked if uniform bounds could be found,
a question  still open for euclidean balls. But for cubes the  answer
is negative, cf. \cite{A2}.  In \cite{Au}, G. Aubrun refined the result from \cite{A2} by
 showing that $c_{1,d} \ge \Theta(\log^{1 -\varepsilon}d)$,
where $\Theta$ denotes the exact order and $\varepsilon > 0$ is arbitrary.
A very significant extension of the Stein and Str\"{o}mberg's
$O( d\log d)$ result, beyond the
euclidean setting, has recently been  obtained  by A. Naor and T. Tao, cf. \cite{NaTa}.

The weak type $(1,1)$ case  for integrable radial
densities defined via bounded decreasing functions was studied in \cite{A1}.
It was shown there that the best constants $c_{1,d}$
satisfy $c_{1,d} \ge \Theta\left(1\right)\left(
2/\sqrt 3\right)^{d/6}$, in strong
contrast with the linear $O(d)$ bounds known for $M$.
This suggests that for these measures and sufficiently
small values of $p > 1$, lack of uniform  bounds in $d$
should also hold. We show here that this is indeed the case, and for a wider class of measures
than those considered in \cite{A1}. We shall remove the assumption of boundedness on densities and
the assumption of finiteness on
measures, replacing these hypotheses with milder
growth conditions on the relative size of balls centered at the
origin (a possibility suggested in \cite[Remark 2.6]{A1}).  Instead of working directly with norm
(or strong type) inequalities when $p > 1$, we shall consider the
weak type $(p,p)$ inequalities. This allows us
to treat the cases  $p=1$  and $p > 1$ simultaneously. Needless
to say, lower bounds for weak type constants immediately imply the
same bounds for strong type constants. In Theorem
\ref{theoremdecp} we show that if balls centered at zero grow sufficiently
fast for some given radius, and this growth experiences a
certain rate of decay at infinity (cf. the theorem for the
exact technical conditions) then there is exponential increase
of the best constants $c_{p,d}$ in the weak type $(p,p)$ inequalities,
for every $p \in [1, p_0)$, where $p_0\approx 1.0378$.
The proof follows the lines of \cite{A1}, but replacing the Dirac delta $\delta_0$ with
$\chi_{B(0,v)}$ for some suitably chosen radius $v$, and using
 a better ball decomposition.
 This allows us to improve the bound on
$c_{1,d}$ from \cite{A1} to
$c_{1,d} \ge \Theta\left(1\right)\left(2/55^{1/6} \right)^{d}$
for $p=1$, even though we are considering
the characteristic function of a ball, rather than the more efficient $\delta_0$. Of course,  Dirac Deltas cannot be
used  when $p > 1$, since
the only reasonable definition of $\|\delta\|_p$
for $p > 1$ is $\|\delta\|_p = \infty$.

Exponential dependency
on the dimension of the best constants {\em also} holds for certain collections
of doubling measures and arbitrarily high values of $p$, cf.
Theorem \ref{theoremdoubling} below. Thus, Stein result on uniform
$L^p$ bounds for $M$ does not extend to
arbitrary doubling measures on $\mathbb{R}^d$, even though the class of doubling measures
represents a natural generalization of $\lambda^d$.
To highlight the difference  between $\lambda^d$ and the measures considered here, we point out  that when $M$ acts on
 radial, radially decreasing $L^p$ functions,
the best weak type $(p,p)$ constants $c_{p,d}$ equal $1$ in every
dimension, see \cite[Theorem 2.6]{AlPe4} (actually, the
result is stated there for $c_{1,d}$, but $c_{1,d} \ge c_{p,d}$,
cf. (\ref{weaktypequp}) below and the explanations afterwards),
while the best strong type constants satisfy $C_{p,d} \le 2^{1/q} q^{1/p}$,
where $q = p/(p-1)$, see \cite[Corollary 2.7]{AlPe4}.

Professor Fernando Soria informs us that he and Alberto Criado have
also
extended the results from \cite{A1} to
some values of $p > 1$, cf. \cite{Cr}; we mention that
where \cite{Cr} and this paper overlap,
the results presented here are more general
and give better bounds.

\section
{Notation and background results.}
The   restriction of  $\mu$ to a measurable set $A$ is denoted
by $\mu|_{A}$; that is,  $\mu|_{A} (B) = \mu (A\cap B)$.
We always assume that $\mu (\mathbb{R}^d) > 0$ and $\mu (B(x, r)) <\infty$,
i.e., measures are nontrivial and locally finite.
The maximal function of a locally finite measure $\nu$ is
defined by
\begin{equation}\label{HLMFmeas}
M_{\mu} \nu (x) := \sup _{\{r > 0: \mu (B(x, r)) > 0\}} \frac{\nu (B(x, r))}{\mu (B(x, r))}.
\end{equation}
 Note  that
formula (\ref{HLMF}) is simply (\ref{HLMFmeas}) in the special case   $\nu << \mu$. Our choice of closed balls in (\ref{HLMF}) and
(\ref{HLMFmeas})
is mere convenience; using open balls instead does not
change the value of the maximal operator at $x$, since each
closed ball is a countable intersection of open balls.
The boundary of $B(x, r)$
is the sphere $\mathbb S (x,r)$. Sometimes we use $B^d (x, r)$ and
$\mathbb S^{d-1} (x,r)$ to make their dimensions explicit. If
$x=0$ and $r=1$, we use the abbreviations $B^d$ and $\mathbb S^{d-1}$.
 Balls are defined using
the $\ell_2$ or euclidean distance
$\|x\|_2 := \sqrt{x^2_1 + \dots +x_d^2}$. The Lebesgue measure on $\mathbb R^d$ is denoted by $\lambda^d$, and area measure on a $d-1$ sphere,
by $\sigma^{d-1}$. Sometimes it is convenient to use normalized versions
of these measures, so balls and spheres  have total mass 1; we
use $N$ as a subscript to denote these normalizations. Thus,
$\lambda^d_N (B^d) = 1$ and $\sigma^{d-1}_N(\mathbb S^{d-1})= 1$.

Regarding the relationships
between different constants, let us
recall that by
the Besicovitch Covering Theorem,  for every locally finite Borel measure
$\mu$ on $\mathbb{R}^d$, and  every $p$ with $1 \le p < \infty$,
the maximal operator satisfies the following
weak type $(p,p)$ inequality:
\begin{equation}\label{weaktypep}
\mu (\{M_\mu g \ge \alpha\}) \le \left(\frac{c \|g\|_p}{\alpha}\right)^p,
\end{equation}
where $c=c(p,d, \mu)$ depends neither on $g\in L^p (\mathbb R^d, \mu)$
nor on $\alpha > 0$. The constant $c$ can also
be taken to be independent of $\mu$ and of $p$. Set $q := p/(p-1)$. Using the quantitative version of the
Besicovitch Covering Theorem given in
 \cite[p. 227]{Su}, we have
 \begin{equation}\label{weaktypequ}
\mu (\{M_\mu g \ge \alpha\}) \le \frac{(2.641 + o(1))^d\|g\|_1}{\alpha}.
\end{equation}
Thus, if $g\in L^p (\mathbb R^d, \mu)$, then $|g|^p \in L^1 (\mathbb R^d, \mu)$, and it follows from Jensen's inequality that
  \begin{equation}\label{weaktypequp}
\mu (\{M_\mu g \ge \alpha\}) = \mu (\{(M_\mu g)^p \ge \alpha^p\}) \le \mu (\{M_\mu |g|^p \ge \alpha^p\}) \le \frac{(2.641 + o(1))^d\|g\|_p^p}{\alpha^p}.
\end{equation}
Letting $c_{p,d,\mu}$ be the best constant $c$ in (\ref{weaktypep}), we
have
$c_{p,d,\mu} \le (2.641 + o(1))^{d/p}$.  This bound is uniform in $\mu$, and
setting $p=1$ in the exponent $d/p$, it can be made uniform in $p$
also. Replacing $(2.641 + o(1))^{d/p}$ by $c_{1,d, \mu}$ in the right hand side
of (\ref{weaktypequp}), we also obtain $c_{p,d,\mu} \le (c_{1,d,\mu})^{1/p} \le c_{1,d, \mu}$.
Let $C_{p,d,\mu}$ be the lowest constant
in
\begin{equation}\label{strongtypep}
\|M_\mu g\|_{L^p(\mathbb{R}^d, \mu)} \le  C_{p,d,\mu} \|g\|_{L^p(\mathbb{R}^d, \mu)}.
\end{equation}
It is
an immediate consequence of Chebyshev's inequality that
$c_{p,d,\mu} \le  C_{p,d,\mu}$, since
\begin{equation}\label{weaklestrong}
\alpha^p\mu (\{(M_\mu g)^p \ge \alpha^p\}) \le \|M_\mu g\|_{p}^p \le C_{p,d, \mu}^p \|g\|_{p}^p.
\end{equation}
When $p$ is small, lower bounds for
$c_{p,d,\mu}$ are quite often not just formally stronger, but substantially
stronger than lower bounds for $C_{p,d,\mu}$,
since it is well-known that for many measures $C_{1,d,\mu} =\infty$
and
$\lim_{p\to 1} C_{p,d,\mu} = \infty$, while  $c_{p,d,\mu}\le c_{1,d,\mu} \le (2.641 + o(1))^{d}$.

Let $d >>1$, and consider Lebesgue measure restricted to the
 unit ball. Most of its mass is concentrated near $\mathbb{S}^{d-1} (0, 1)$, since volume scales like $R^d$, so the ball ``looks" very much like the sphere.  The main idea in \cite{A1} and here is to realize that
 this is a rather general phenomenon: Rotationally invariant
 measures with a certain decay at infinity, will often be very similar
in a
certain region to area
 on some sphere $\mathbb{S}^{d-1} (0, R_1)$. Hence, the size of  balls
 in that region can be estimated by intersecting them with $\mathbb{S}^{d-1} (0, R_1)$  and then using
 the area of the spherical caps resulting from such intersections.
 Given a unit vector $v\in \mathbb
R^{d}$ and $s \in [0, 1)$, the $s$ spherical
cap about $v$ is the set $C(s, v) :=\{\theta \in
\mathbb S^{d-1}: \langle \theta, v\rangle \ge s\}$.
Spherical caps are just geodesic balls $B_{\mathbb
S^{d-1}}(x, r)$ in $\mathbb S^{d-1}$. For spheres other than
$\mathbb{S}^{d-1}$, spherical caps are defined in an entirely analogous
way.
If $v=e_1 =
(1,0,\dots , 0)$ and $s =2^{-1}$, then
\begin{equation}\label{cap}
B_{\mathbb S^{d-1}}(e_1, \pi/ 3) =
C(2^{-1}, e_1) = \mathbb
S^{d-1} \cap B( e_1,  1).
\end{equation}
More generally, given any angle $r\in (0, \pi/2)$,
writing $s = \cos r$ and $t = \sin r$, we have
\begin{equation}\label{cap1}
B_{\mathbb S^{d-1}}(e_1, r) =
C(s, e_1) \subset  B( s e_1,  t).
\end{equation}
The following lemma shows that $\sigma^{d-1}_N(C(s, e_1)) = t^d/\Theta(\sqrt{d})$,
where $\Theta$ stands for exact order (i.e.,
$g = \Theta(h)$ if and only if $g = O(h)$ and $h= O(g)$); the
special case $r=\pi/3$ is used in
the proof of \cite[Theorem 2.3]{A1}.
We recall the following results on volumes
and areas: i)
$\lambda^d (B^d) =\frac{\pi^{d/2}}{\Gamma (1 + d/2)}$; ii)
$\sigma^{d-1} (\mathbb S^{d-1}) = d \lambda^d (B^d)$; iii)
 $\sigma^{d-1} (B_{\mathbb
S^{d-1}} (x,r)) = \sigma^{d-2} (\mathbb S^{d-2})\int_0^r
\sin^{d-2}t dt$ (cf. for instance \cite[(A.11) pg. 259]{Gra}).

\begin{lemma}\label{sphericalcaps} Let $r\in (0, \pi/2)$, let
$\sigma^{d-1}_N$
be normalized area on the sphere $\mathbb S^{d-1} (0, R)$, and let
$s = \cos r$, $t = \sin r$, so with this notation,
$\sigma^{d-1}_N (B_{\mathbb S^{d-1} (0, R) }(R e_1, R r))
 =
\sigma^{d-1}_N (C(R s, R e_1))$.
Then
\begin{equation}\label{caps}
\frac{ t^{d-1}}{ \sqrt{2 \pi d}}
 \le
\sigma^{d-1}_N (C(R s, R e_1))
\le  \frac{ t^{d-1}}{s \sqrt{2 \pi d}}\sqrt{1 + \frac1d}.
\end{equation}
\end{lemma}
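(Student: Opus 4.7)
The plan is to reduce to the unit sphere ($R=1$, by scale invariance of $\sigma_N^{d-1}$), apply the formula iii) for the area of a geodesic ball, and then estimate the numerator integral and the normalizing ratio of total sphere areas separately. By iii), what needs to be controlled is
\[
\sigma^{d-1}_N(C(s,e_1)) = \frac{\sigma^{d-2}(\mathbb S^{d-2})}{\sigma^{d-1}(\mathbb S^{d-1})}\int_0^r \sin^{d-2}\tau\, d\tau.
\]

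First I would change variables $u=\cos\tau$, turning the integral into $\int_s^1 (1-u^2)^{(d-3)/2}\,du$. The trick to get a closed-form upper and lower bound is to use that on $[s,1]$ we have $u\le 1$ and $u/s\ge 1$, so the integrand is sandwiched between $u(1-u^2)^{(d-3)/2}$ and $(u/s)(1-u^2)^{(d-3)/2}$. Both of these are integrated exactly via $v=1-u^2$, yielding
\[
\frac{t^{d-1}}{d-1}\le \int_0^r \sin^{d-2}\tau\, d\tau \le \frac{t^{d-1}}{s(d-1)}.
\]

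Next I would handle the normalizing ratio. Applying i) and ii),
\[
\frac{\sigma^{d-2}(\mathbb S^{d-2})}{\sigma^{d-1}(\mathbb S^{d-1})}
=\frac{d-1}{d\sqrt{\pi}}\cdot\frac{\Gamma((d+2)/2)}{\Gamma((d+1)/2)}.
\]
The main technical input is Wendel's inequality $\sqrt{y}\le \Gamma(y+1)/\Gamma(y+1/2)\le \sqrt{y+1/2}$, applied at $y=d/2$, which gives $\sqrt{d/2}\le \Gamma((d+2)/2)/\Gamma((d+1)/2)\le \sqrt{(d+1)/2}$. A brief simplification then yields
\[
\frac{d-1}{\sqrt{2\pi d}}\le \frac{\sigma^{d-2}(\mathbb S^{d-2})}{\sigma^{d-1}(\mathbb S^{d-1})} \le \frac{\sqrt{1+1/d}}{\sqrt{2\pi d}}\cdot\frac{d-1}{1}\cdot\frac{1}{d-1}\cdot(d-1),
\]
or more cleanly, after multiplying against the integral bounds above, the factors of $d-1$ cancel, producing exactly
\[
\frac{t^{d-1}}{\sqrt{2\pi d}}\le \sigma^{d-1}_N(C(s,e_1))\le \frac{t^{d-1}\sqrt{1+1/d}}{s\sqrt{2\pi d}}.
\]

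The only nontrivial step is invoking a sufficiently sharp two-sided estimate for the ratio of Gamma functions with half-integer shift; Wendel's inequality is exactly tight enough to give both the clean $1/\sqrt{2\pi d}$ on the lower side and the multiplicative error $\sqrt{1+1/d}$ on the upper side, which is what makes the constants in \eqref{caps} match. The rest is bookkeeping around the two elementary integral inequalities.
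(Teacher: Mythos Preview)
Your proof is correct and follows essentially the same route as the paper. The paper inserts a factor of $\cos u$ (using $s\le\cos u\le 1$ on $[0,r]$) to make $\int_0^r\sin^{d-2}u\,du$ exactly integrable, which after your substitution $u=\cos\tau$ is precisely your trick of inserting the factor $u$ (using $s\le u\le 1$); and the Wendel-type bound you invoke is the same Gamma-ratio estimate the paper derives from log-convexity, so the constants match for the same reason.
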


\begin{proof} Observe first that the relative size of caps depends
 neither on the center of the ball nor on the radius.
In particular, since we are dealing with normalized area, we may
assume that $R=1$.
We  use the following Gamma function
estimate (an immediate consequence of the log-convexity of
$\Gamma$ on $(0,\infty)$, cf. Exercise 5, pg. 216 of \cite{Web}):
\begin{equation}\label{ratio}
\left(\frac{d}{2}\right)^{1/2}
\le
\frac{\Gamma (1 + d/2)}{\Gamma (1/2 + d/2)}
\le
\left(\frac{d+1}{2}\right)^{1/2}.
\end{equation}

From i), ii), iii), (\ref{ratio})
and the fact that $\cos u \ge s$ on $[0, r]$, we get:
\begin{equation}\label{in}
\sigma^{d-1}_N\left( C(s, e_1)\right)
\le
\frac{\sigma^{d-2}
(\mathbb S^{d-2})}{s \sigma^{d-1}(\mathbb S^{d-1})} \int_0^{r}
\sin^{d-2}u \cos u du
\end{equation}
\begin{equation}\label{in2}
= \frac{1}{sd} \frac{\lambda^{d-1} (B^{d-1})}{\lambda^{d} (B^{d})}
t^{d-1}
\le
\frac{ t^{d-1}}{s \sqrt{2 \pi d}}\sqrt{1 + \frac1d}.
\end{equation}
Likewise, since $\cos u \le 1$,
\begin{equation}\label{in}
\sigma^{d-1}_N\left( C(s, e_1)\right)
\ge
\frac{\sigma^{d-2}
(\mathbb S^{d-2})}{\sigma^{d-1}(\mathbb S^{d-1})} \int_0^{r}
\sin^{d-2}u \cos u du
= \frac{1}{d} \frac{\lambda^{d-1} (B^{d-1})}{\lambda^{d} (B^{d})}
t^{d-1}
\ge
\frac{ t^{d-1}}{ \sqrt{2 \pi d}}.
\end{equation}
\end{proof}

\section
{Weak type $(p,p)$ bounds for rotationally invariant measures}
 Fix $d\in \mathbb{N}\setminus \{0\}$, and
let $f: (0,\infty ) \to [0,\infty )$ be a nonincreasing
(possibly unbounded)
 function, not zero almost everywhere, such that $f(t) t^{d-1} \in
 L^1_{\operatorname{loc}}((0,\infty), dt)$. Then the function $f$
defines a  locally integrable, rotationally invariant (or radial) measure
$\mu$  on $\mathbb{R}^d$ via
\begin{equation}\label{defrad}
\mu (A) := \int_A  f(\|y\|_2)  d\lambda^{d} (y).
\end{equation}
Observe that the local integrability of $f(t) t^{d-1}$ is assumed for a {\em fixed} $d$, not
for all values of $d$ simultaneously. Note also that $f$ can
depend on $d$.
When $A = B(0, R)$, integration in polar coordinates
yields the well known expression $\mu (B(0,R)) = \sigma^{d-1} (\mathbb{S}^{d-1})\int_0^R f(t) t^{d-1} dt$.
Since  (unlike \cite{A1}) finiteness of measures and boundedness of
densities are not assumed in the present paper, we need to impose some conditions on the rate of growth of
balls centered at zero. To this end, we define,
for all $u\in (0,1]$ and all $R > 0$ such that $\mu (B(0, u R)) >0$,
\begin{equation}\label{funch}
h_u (R):= \frac{\mu (B(0, R))}{\mu (B(0, u R))}.
\end{equation}
In the extreme case $\mu = \delta_0$, $h_u (R) = 1$ always, and
for every $g$ with $g(0) < \infty$, we have
$M_\mu g = g = g(0)$ a.e. with respect to $\delta_0$.
Thus, for all $p\ge 1$ and all $d\ge 1$, $c_{p,d,\delta_0} =
C_{p,d,\delta_0} = 1$. Of course, in this case there is no relationship between
 $\delta_0$ and $d$.
 For the  measures  considered in \cite[Theorem 2.3]{A1},
$\lim_{R\to 0} h_u(R) =
  u^{- d}$ and $\lim_{R\to \infty} h_u(R) =
  1$; we present this fact, which appears within the proof of \cite[Theorem 2.3]{A1},
  as part of the next proposition.

\begin{proposition} \label{dimmeas} Fix $d \in \mathbb{N}\setminus\{ 0\}$.  Let $f: (0,\infty ) \to [0,\infty )$ be a nonincreasing
 function with $f > 0$ on some interval $(0,a)$
and $f(t) t^{d-1}\in L^1_{loc} (0, \infty)$. If
 $\mu$ is the measure defined by (\ref{defrad}), then for every
 $u\in (0,1)$ and every $R > 0$ we have $h_u (R) \le u^{-d}$.
 If additionally $f$ is bounded, then for every $u\in (0,1)$,
 $\sup_{R > 0} h_u (R) = \lim_{R\to 0} h_u (R) = u^{-d}$.
 Regardless of whether $f$ is bounded or not, if $\mu$ is finite, then
 for every
 $u\in (0,1)$ we have $\lim_{R\to \infty} h_u (R) = 1$.
\end{proposition}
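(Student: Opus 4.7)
The plan is to express $h_u(R)$ in a form where the factor $u^{-d}$ is explicit, and then read off all three conclusions from monotonicity of $f$ and elementary convergence arguments.

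Using polar coordinates and the substitution $t=Rs$,
\[
\mu(B(0,R)) = \sigma^{d-1}(\mathbb S^{d-1})\int_0^R f(t)t^{d-1}\,dt = \sigma^{d-1}(\mathbb S^{d-1})\, R^{d}\int_0^1 f(Rs)s^{d-1}\,ds.
\]
The same formula applied to the radius $uR$ gives
\[
h_u(R) \;=\; u^{-d}\,\frac{\int_0^1 f(Rs)s^{d-1}\,ds}{\int_0^1 f(uRs)s^{d-1}\,ds}.
\]
Note that the denominator is strictly positive whenever $uR>0$: since $f>0$ on $(0,a)$ and $f$ is nonincreasing, $f\ge f(\min\{a/2,uR/2\})>0$ on an interval of positive length inside $(0,uR)$.

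Part (i) is now immediate: since $u\in(0,1)$ and $f$ is nonincreasing, $f(Rs)\le f(uRs)$ pointwise, so the displayed ratio is at most $1$ and $h_u(R)\le u^{-d}$. Part (iii) follows from the observation that if $\mu(\mathbb R^{d})<\infty$, then by monotone convergence $\mu(B(0,R))\uparrow \mu(\mathbb R^{d})$ as $R\to\infty$ and likewise $\mu(B(0,uR))\uparrow \mu(\mathbb R^{d})$; since $\mu(\mathbb R^{d})>0$, the ratio $h_u(R)$ tends to $1$.

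For part (ii), assume $f$ is bounded. The limit $L:=\lim_{t\to 0^+}f(t)$ exists by monotonicity, is finite by boundedness, and is positive because $L\ge f(a/2)>0$. Applying the dominated convergence theorem to each integral in the displayed ratio, with the constant $\sup f$ serving as the dominating function and $s^{d-1}\in L^{1}(0,1)$ as the base measure, both numerator and denominator converge to $L/d$ as $R\to 0$, so $h_u(R)\to u^{-d}$. Combined with part (i), this yields $\sup_{R>0} h_u(R)=\lim_{R\to 0}h_u(R)=u^{-d}$.

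The only subtle point is verifying that the limit $L$ in (ii) is strictly positive and finite so that the dominated convergence argument gives a genuine equality $u^{-d}$, rather than a $0/0$ or $\infty/\infty$ indeterminacy; both are guaranteed by the hypotheses ``$f>0$ on $(0,a)$'' and ``$f$ bounded,'' together with monotonicity. Once this is in place, the proof is essentially computational.
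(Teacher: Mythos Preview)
Your proof is correct and follows essentially the same approach as the paper. The only cosmetic difference is that you rescale both integrals to $(0,1)$ via $t=Rs$, which makes the factor $u^{-d}$ explicit immediately and then lets you compare $f(Rs)$ and $f(uRs)$ pointwise; the paper instead splits $B(0,R)$ into $B(0,uR)$ and the shell, pivoting at the value $f(uR)$, and for part~(ii) phrases the limit in terms of the average of $f$ over $B(0,R)$ rather than invoking dominated convergence explicitly. Both routes are the same elementary argument in slightly different bookkeeping.
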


\begin{proof} The fact that $\sup_{R > 0} h_u (R) \le u^{-d}$ is
obvious since $f$ is nonincreasing, so the case where $f$ is constant
yields the largest possible growth, and then we just have a
multiple of Lebesgue measure. Or, more formally:
\begin{equation}
\frac{\mu (B(0, R))}{\mu (B(0, u R))}
= \frac{\mu (B(0, u R)) + \mu (B(0, R)\setminus B(0, uR))}{\mu (B(0, u R))}
\end{equation}
\begin{equation}
=
1 + \frac{\sigma^{d-1} (\mathbb{S}^{d-1})\int_{uR}^R f(t) t^{d-1} dt}{\sigma^{d-1} (\mathbb{S}^{d-1})\int_0^{u R} f(t) t^{d-1} dt}
\le
1 + \frac{f(u R) \int_{uR}^R t^{d-1} dt}{f(u R) \int_0^{uR} t^{d-1} dt} = u^{-d}.
\end{equation}

Suppose next that in addition to being nonincreasing, $f$ is bounded.
Then the
averages $\frac{1}{\lambda^d (B(0,R))} \int_{B(0,R)} f(\|x\|_2) dx$ are bounded and nonincreasing with respect to $R$. Thus,
$\lim_{R\to 0} \frac{1}{\lambda^d (B(0,R))} \int_{B(0,R)} f(\|x\|_2) dx = L$ exists, and
\begin{equation}
\lim_{R\to 0} \frac{\mu (B(0, R))}{\mu (B(0, u R))}
=
\lim_{R\to 0} \frac{\int_{B(0,R)} f(\|x\|_2) dx}{\int_{B(0,u R)} f(\|x\|_2) dx}
= \lim_{R\to 0} \frac{ L \lambda^d (B(0,  R))}{L \lambda^d (B(0, u R))} = u^{-d}.
\end{equation}
The last assertion about finite measures is obvious.
\end{proof}

\begin{remark} The condition
$\lim_{R\to 0} h_u (R) = u^{-d}$ can be satisfied by unbounded densities
with a mild singularity a $0$. Consider, for instance, $f(x) = |\log (x) \chi_{(0,1]}(x)|$,
for every $d \ge 1$.
\end{remark}

\begin{lemma}\label{pbound} Let $\mu$ be a rotationally invariant measure
on $\mathbb{R}^d$,
 let $1\le p < \infty$, and let $q := p/(p -1)$.
 For $0 < R$ and  $0< v < 1$,
  write
 $H := \sqrt{R^2 + v^2 R^2}$.  If the  pair
 $(v, R)$ is such that $\mu (B(0, v R)) > 0$, then
\begin{equation}\label{lowbouwpgen}
c_{p, d, \mu}
\ge
\frac{\mu(B(0,v R))^{1/q} \mu (B(0,R))^{1/p}}{2 \mu (B(R e_1, H))}.
\end{equation}
If additionally there exist  $T, t_0 > 0$ and $v_0 \in (0,1)$
such that
 $
 \sup_{\{R >0: v_0 R \ge T\}} h_{v_0} (R) \le v_0^{ - t_0 d},
 $
 then
 \begin{equation}\label{lowbouwp}
c_{p, d, \mu} \ge
\sup_{\{R >0: v_0 R \ge T\}} \frac{ v_0^{t_0 d/q}\mu (B(0,R))}{2 \mu (B(R e_1, H))}.
\end{equation}
\end{lemma}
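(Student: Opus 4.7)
I would apply the weak-type $(p,p)$ inequality to the test function $g := \chi_{B(0, vR)}$, the $L^p$-substitute for the Dirac delta used in \cite{A1}; then $\|g\|_{L^p(\mu)}^p = \mu(B(0, vR))$. By the defining property of $c_{p,d,\mu}$,
$$c_{p,d,\mu}^p \;\ge\; \frac{\alpha^p\,\mu(\{M_\mu g \ge \alpha\})}{\mu(B(0,vR))}$$
for every $\alpha > 0$. My plan is to use this with the level $\alpha_0 := \mu(B(0,vR))/\bigl(2\mu(B(Re_1, H))\bigr)$ and to establish $\mu(\{M_\mu g \ge \alpha_0\}) \ge \mu(B(0,R))$; a brief rearrangement using $1/p + 1/q = 1$ then yields exactly (\ref{lowbouwpgen}).

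The value $M_\mu g(Re_1)$ is bounded below by averaging $g$ over the ball $B(Re_1, H)$. A Pythagorean calculation shows that every $y \in B(0,vR)$ with $y \cdot e_1 \ge 0$ lies in $B(Re_1, H)$: indeed $\|y - Re_1\|^2 = \|y\|^2 - 2R y_1 + R^2 \le v^2 R^2 + R^2 = H^2$. By the radial symmetry of $\mu$, this near hemisphere of $B(0,vR)$ has mass $\tfrac12 \mu(B(0,vR))$, so $M_\mu g(Re_1) \ge \alpha_0$; by rotational invariance the same estimate holds on the entire sphere $\mathbb{S}^{d-1}(0, R)$. (The same inclusion gives $\mu(B(Re_1, H)) \ge \tfrac12\mu(B(0,vR))$, so $\alpha_0 \le 1$ and the bound is nontrivial.)

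To spread the bound from the sphere to the solid ball $B(0,R)$ I would exploit that $g$ is radial and radially nonincreasing while $\mu$ is radial, whence $M_\mu g$ is itself radial and nonincreasing in $\|x\|$; for points with $\|x\| \le R$ this gives $M_\mu g(x) \ge M_\mu g(Re_1) \ge \alpha_0$, so $\mu(\{M_\mu g \ge \alpha_0\}) \ge \mu(B(0,R))$. That establishes (\ref{lowbouwpgen}). For (\ref{lowbouwp}), specialize to $v = v_0$ and rewrite the hypothesis $h_{v_0}(R) \le v_0^{-t_0 d}$ as $\mu(B(0, v_0 R)) \ge v_0^{t_0 d}\mu(B(0,R))$; inserting this into (\ref{lowbouwpgen}) bounds $\mu(B(0,v_0 R))^{1/q}$ from below by $v_0^{t_0 d/q}\mu(B(0,R))^{1/q}$, and taking the supremum over $R$ with $v_0 R \ge T$ produces (\ref{lowbouwp}).

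The main obstacle is the passage from the measure-zero sphere $\mathbb{S}^{d-1}(0,R)$ to the positive-mass ball $B(0,R)$, i.e., the radial monotonicity of $M_\mu g$. The naive attempt to use the same ball $B(x,H)$ at every $x$ fails: a standard convolution/rearrangement argument shows that $\mu(B(\cdot,H))$ is radial nonincreasing in $\|\cdot\|$, so $\mu(B(x,H)) \ge \mu(B(Re_1,H))$ for $\|x\| \le R$, pointing the wrong way. The compensating mechanism is that for $\|x\| < R$ the intersection $B(x,H) \cap B(0,vR)$ is strictly larger than a hemisphere (in fact equals $B(0,vR)$ once $\|x\| \le H - vR$), so a careful balance between this enlarged numerator and the enlarged denominator — or, equivalently, a radius depending on $x$ — is what makes the extension work.
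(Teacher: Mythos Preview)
Your approach is the paper's approach: same test function $\chi_{B(0,vR)}$, same level $\alpha=\mu(B(0,vR))/(2\mu(B(Re_1,H)))$, the same Pythagorean check that $B(Re_1,H)$ captures the half of $B(0,vR)$ with $y_1\ge 0$, and the same algebra (using $\|\chi_{B(0,vR)}\|_p^p=\|\chi_{B(0,vR)}\|_1$ and $1-1/p=1/q$) to reach (\ref{lowbouwpgen}); the second part is likewise identical---specialize to $v=v_0$ and rewrite $h_{v_0}(R)\le v_0^{-t_0d}$ as a lower bound on $\mu(B(0,v_0R))$.

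The one real difference is how the inclusion $B(0,R)\subset\{M_\mu g\ge\alpha\}$ is handled. The paper writes only ``By rotational invariance of $\mu$, we have $B(0,R)\subset\{M_\mu\chi_{B(0,vR)}\ge\alpha\}$'' and proceeds. You correctly observe that rotational invariance alone delivers only the sphere $\mathbb{S}^{d-1}(0,R)$, and you propose instead to invoke the radial monotonicity of $M_\mu g$ (for $g$ radial decreasing and $\mu$ radial). That is the right mechanism. However, your closing paragraph is a description of the difficulty rather than a proof: you note that the same-radius comparison goes the wrong way on the denominator, then assert that the enlarged numerator (or an $x$-dependent radius) furnishes ``a careful balance\dots [that] makes the extension work'' without actually exhibiting that balance. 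So your write-up is more honest than the paper's about where the nontrivial step sits, but it is not more complete---both leave this passage as an asserted (and true, under the stated hypotheses) fact rather than a proved one.
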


\begin{proof}
Note that
\begin{equation}\label{maxfnbouwp}
M_\mu \chi_{B(0,v R)} (R e_1)
\ge
\frac{\|\chi_{B(0, v R)}\|_1}{2 \mu (B(R e_1, H))} =: \alpha.
\end{equation}
By rotational invariance of $\mu$, we have $B(0, R) \subset \{M_\mu
\chi_{B(0,v R)} \ge \alpha\}$.
And since  $\chi_{B(0,v  R)} = \chi_{B(0,v R)}^p$, it follows
that
$\|\chi_{B(0,v R)}\|_1 = \|\chi_{B(0,v R)}\|_p^p$.
Using (\ref{weaktypep}) we see that
\begin{equation}\label{bouwp1}
c_{p, d, \mu}
\ge
\frac{ \alpha\left( \{M_\mu \chi_{B(0,v R)} \ge \alpha\} \right)^{1/p}}{\|\chi_{B(0,v R)}\|_p}
\ge
\frac{\|\chi_{B(0,v R)}\|_1}{2 \mu (B(R e_1, H))}
\frac{ \mu (B(0,R))^{1/p}}{\|\chi_{B(0, v R)}\|_p}
\end{equation}
\begin{equation}\label{bouwp2}
=
\frac{\mu(B(0,v R))^{1/q} \mu (B(0,R))^{1/p}}{2 \mu (B(R e_1, H))}.
\end{equation}
Specializing to $v=v_0$ and using the hypothesis on $T$ and $t_0$ we obtain
\begin{equation}\label{bouwp10}
c_{p, d, \mu}
\ge
  v_0^{t_0 d/q}
\frac{ \mu (B(0,R))}{2 \mu (B(R e_1, H))}
\end{equation}
for every $ R > 0$ such that $v_0 R \ge T$.
\end{proof}

The preceding Lemma is more general than needed in the present paper,
since we
are not assuming that $\mu$ is of the form given by (\ref{defrad});
this greater generality will be useful in future work. If $\mu$
is given by (\ref{defrad}), then by Proposition \ref{dimmeas},
the condition
on $h_v(R)$ is satisfied for some $t_0 \le 1$, all $v\in (0,1)$,
and all $T > 0$. So the Lemma is applicable and
furthermore, any $v_0\in (0,1)$ can be used
(in the next Theorem we take $v_0 = 1/2$). The idea of the
proof is to
choose $R_1$ so $ \mu (B(R_1 e_1, H))$ is exponentially small
(in $d$) when compared with  $\mu (B(0, R_1))$, and then to adjust
$q$ in (\ref{lowbouwp}) so $v_0^{-1/q}$ is sufficiently close to $1$. This
yields exponential
growth of the constants for  $p > 1$ small enough. Recall that $C_{p, d, \mu}$ denotes the best constant in the
strong type $(p,p)$ inequalities. We emphasize that in the next result,
we can have different functions $f$ associated to different dimensions
$d$.

\begin{theorem}\label{theoremdecp} Fix
$d\in \mathbb{N}\setminus \{0\}$,
and set $u = \sqrt{2/3}$.
 Let $f: (0,\infty ) \to [0,\infty )$ be a nonincreasing
 function and let $\mu$ be the radial measure defined via (\ref{defrad}). Assume  $\mu$ satisfies
 \begin{equation}\label{hipotesis}
   \sup_{R > 0} h_u (R) \ge
   u^{-\left(\frac{6\log 2 - \log55}{3\log 3 - 3\log2}\right) d}
  =
  \left(\frac{64}{55}\right)^{\frac{d}{6}}\ge \limsup_{R\to \infty} h_u(R).
 \end{equation}
  Then for every $p$ such that
$
 1 \le p < \frac{6\log 2}{\log 55}  \approx 1.0378 $,
we have
\begin{equation}\label{bestdec}
55^{-1/6}2^{1/p}>1
\mbox{\ \ \ \ \ and \ \ \ \ \ }
C_{p,d,\mu} \ge c_{p,d,\mu} \ge  \frac{1}{4 + \Theta\left(\frac{1}{\sqrt{d}}\right)} \left(\frac{2^{1/p}}{55^{1/6}}\right)^{d}.
\end{equation}
\end{theorem}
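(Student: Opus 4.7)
The plan is to apply Lemma~\ref{pbound} with the specific choice $v = 1/2$, combined with the universal bound $h_{1/2}(R)\le 2^d$ from Proposition~\ref{dimmeas}, and then bound $\mu(B(R_1 e_1, R_1\sqrt{5}/2))$ through a shell decomposition whose termwise exponents are controlled by the hypothesis on $h_u$. The elementary equivalence $1\le p<6\log 2/\log 55\iff 2^{1/p}/55^{1/6}>1$ is a direct algebraic check, and the identity $2^{-1/q}(64/55)^{1/6} = 2^{1/p}/55^{1/6}$ is what produces the base of the theorem.

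First I choose $R_1$. Because $R\mapsto \mu(B(0,R))$ is absolutely continuous in $R$, the function $h_u$ is continuous on its domain, so the hypothesis guarantees that $\{R>0:h_u(R)\ge (64/55)^{d/6}\}$ is nonempty and, by the $\limsup$ condition, bounded. Let $R_1$ be its supremum: by continuity $h_u(R_1)\ge (64/55)^{d/6}$, while for every $R>R_1$ one has $h_u(R)<(64/55)^{d/6}$. Iterating this strict inequality yields
\begin{equation*}
\mu(B(0,u^{-k}R_1))\le (64/55)^{kd/6}\mu(B(0,R_1))\qquad(k=1,2,\dots).
\end{equation*}

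Next I apply inequality (\ref{lowbouwpgen}) at $R=R_1$ with $v=1/2$, so that $H=R_1\sqrt{5}/2$, and use Proposition~\ref{dimmeas} in the form $\mu(B(0,R_1/2))\ge 2^{-d}\mu(B(0,R_1))$. This reduces the theorem to
\begin{equation*}
c_{p,d,\mu}\ge \frac{\mu(B(0,R_1))}{2^{1+d/q}\,\mu(B(R_1e_1,R_1\sqrt{5}/2))},
\end{equation*}
and then to proving $\mu(B(R_1e_1,R_1\sqrt{5}/2))\le K(d)(55/64)^{d/6}\mu(B(0,R_1))$ for some $K(d)=O(1)$. I decompose $B(R_1e_1,H)$ as $B(0,uR_1)\cup\bigcup_{k=0}^{4}S_k$, where $S_k=B(0,u^{-k}R_1)\setminus B(0,u^{-k+1}R_1)$; this covers $B(R_1e_1,H)$ since $u^{-4}=9/4>1+\sqrt{5}/2$. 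The innermost ball is bounded directly by the sup hypothesis: $\mu(B(0,uR_1))\le (55/64)^{d/6}\mu(B(0,R_1))$. On each shell $S_k$ the slice of $B(R_1e_1,H)$ on the sphere of radius $r$ is a spherical cap whose $s_r=(r^2-R_1^2/4)/(2rR_1)$ is increasing in $r$, so Lemma~\ref{sphericalcaps} bounds the normalized cap area throughout the shell by $t_{u^{-k+1}R_1}^{d-1}/(s_{u^{-k+1}R_1}\sqrt{2\pi d})\sqrt{1+1/d}$. Multiplying by the shell's total mass, which by the iterated estimate is at most $(64/55)^{kd/6}\mu(B(0,R_1))$, gives an exponential per-dimension base of $t_{u^{-k+1}R_1}\cdot(64/55)^{k/6}$.

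The main obstacle is the numerical verification that this per-dimension base is strictly less than $(55/64)^{1/6}$ for each $k\in\{0,1,2,3,4\}$. The explicit values of $t_{u^{-k+1}R_1}^{2}$ are $359/384$, $55/64$, $71/96$, $5/9$, $239/864$, and the five required strict inequalities reduce to elementary arithmetic; the specific choice $u^2=2/3$ in the hypothesis is precisely what makes all of them hold simultaneously. Once this is established, each shell contributes $O(1/\sqrt{d})(55/64)^{d/6}\mu(B(0,R_1))$, and summing the inner piece with the five shells yields $\mu(B(R_1e_1,H))\le K(d)(55/64)^{d/6}\mu(B(0,R_1))$ with $K(d)=2+\Theta(1/\sqrt{d})$. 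Substituting back and invoking the algebraic identity above produces the claimed lower bound $c_{p,d,\mu}\ge (4+\Theta(1/\sqrt{d}))^{-1}(2^{1/p}/55^{1/6})^{d}$. A secondary delicate point is the choice of $R_1$ as the supremum of the level set (rather than an arbitrary element), which is what makes the iterated bound on $h_u$ available beyond $R_1$.
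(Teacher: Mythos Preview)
Your argument is essentially correct and takes a genuinely different route from the paper's. The paper exploits the monotonicity of the density via the reflection inequality $\mu(B(R_1e_1,H)\cap\{x_1\ge R_1\})\le\mu(B(R_1e_1,H)\cap\{x_1\le R_1\})$, thereby reducing to the half contained in $B(0,u^{-2}R_1)=B(0,3R_1/2)$, and then splits that half into only three pieces (the inner ball $B(0,uR_1)$, the annulus $B(0,R_1)\setminus B(0,uR_1)$, and the outer piece $B(0,R_1)^c\cap\{x_1\le R_1\}$), so that only two cap estimates are needed. Your dyadic shell decomposition out to $B(0,u^{-4}R_1)$ covers the whole ball $B(R_1e_1,H)$ directly, without the reflection step, at the price of five numerical inequalities instead of two. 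Your route in fact yields the sharper $K(d)=1+o(1)$ (hence denominator $2+o(1)$ rather than $4$), since each shell has per-dimension base \emph{strictly} below $(55/64)^{1/6}$ and is therefore exponentially negligible compared with the inner-ball term; your stated $K(d)=2+\Theta(1/\sqrt d)$ is a harmless overestimate.

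There is, however, one genuine gap. Your choice of $R_1$ as the supremum of $\{R:h_u(R)\ge(64/55)^{d/6}\}$ need not work: the hypothesis (\ref{hipotesis}) allows equality on both sides, so this level set may be empty (if the supremum of $h_u$ is not attained) or unbounded (if $h_u$ hits $(64/55)^{d/6}$ along a sequence tending to infinity, which the $\limsup$ condition does not preclude). The paper handles this by an $\varepsilon$-argument: it uses instead the set $A=\{R:h_u(R)\ge(1-\varepsilon)(64/55)^{d/6}\}$, and when $A$ is unbounded it picks $R_1\in A$ large enough that $h_u(u^{-1}R_1),h_u(u^{-2}R_1)<(1+\varepsilon)(64/55)^{d/6}$, letting $\varepsilon\to0$ at the end. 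Your argument is repaired the same way (you would need the analogous bound at $u^{-1}R_1,\dots,u^{-4}R_1$, which the $\limsup$ hypothesis provides for $R_1$ large); the iterated estimate then picks up an innocuous factor $(1+\varepsilon)^k$ in the $k$-th shell.
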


\begin{proof}  Assume that
$d \ge 2$, and set $H =  \sqrt{R_1^2 + v^2 R_1^2}$, as in Lemma \ref{pbound}. Arguing as in \cite[Theorem 2.3]{A1},
we look for a radius $R_1$ such that $B(R_1 e_1, H)$  has very small measure
compared to $B(0, R_1)$.
Fix $0<\varepsilon<1/10$. Define
$A= \{R > 0: h_u(R) \ge  (1-\varepsilon) (64/55)^{d/6}\}$. By the continuity in $R$ of $h_u$ and the hypotheses  in (\ref{hipotesis})  $A$ is a
nonempty closed set. If $A$ is unbounded, we choose $R_1\in A$ so large that $h_u(R_1) \ge (1-\varepsilon) (64/55)^{d/6}$ and $h_u(u^{-1}R_1),
h_u(u^{-2}R_1) < (1+\varepsilon)(64/55)^{d/6}$. If $A$ is bounded, then $R_1:=\max A$ automatically satisfies the preceding conditions on $h_u(R_1)$, $h_u(u^{-1}R_1)$, and $h_u(u^{-2}R_1)$.
Set $v = 1/2$. Then $H =  R_1 \sqrt{5}/2$.
Write $T = \sqrt{R_1^2 + H^2} = 3R_1/2$,
and observe that $T=u^{-2}R_1$,
so
$B(R_1 e_1, H)\cap \{x_1\le R_1\} \subset
 B(0,  3R_1/2)$. Since the density of $\mu$ is radially decreasing,
\begin{equation}\label{conc}
\mu (B(R_1 e_1, H)\cap \{x_1\ge R_1\})\le\mu (B(R_1 e_1, H)\cap \{x_1\le R_1\}).
\end{equation}
Thus $\mu (B(R_1 e_1, H) \le 2 \mu (B(R_1 e_1, H) \cap \{x_1\le R_1\})$, so is enough to control this latter term.
To this end, we split
$B(R_1 e_1, H) \cap \{x_1\le R_1\}$ into the following three  pieces and
estimate the measure of each one: $B(0, u R_1)\cap B(R_1 e_1, H)$,
$B(0, R_1)^c\cap B(R_1 e_1,  H) \cap \{x_1\le  R_1\}$, and
$(B (0, R_1)\setminus B(0, \sqrt{2/3} R_1)) \cap B(R_1 e_1, H)$.
 First we bound the
part containing the origin:
\begin{equation}\label{inside1dp}
\mu (B(0, u R_1)\cap B(R_1 e_1, H))
\le
\mu (B(0, u R_1))
\le
\frac{\mu (B(0, R_1))}{1-\varepsilon} \left(\frac{55}{64}\right)^{d/6}.
\end{equation}

The other two parts  are contained
inside
certain cones, whose radial projections into the unit sphere are spherical caps. So we apply Lemma \ref{sphericalcaps}.
To control
$\mu (B(0, R_1)^c\cap B(R_1 e_1,  H) \cap \{x_1\le  R_1\})$,
we define $\nu$ on
$\mathbb{S}^{d-1}(0, 1)$  as the pushforward
(via the radial projection map) of
 $\mu$ restricted to $B(0, u^{-2} R_1)\setminus B(0,  R_1)$.
 Now $\nu$ is a rotationally invariant measure on $\mathbb{S}^{d-1}$,
so it must be a  multiple  $m \sigma^{d-1}_N$ of normalized area.
Since $\nu (\mathbb{S}^{d-1}) = m$, we have
$m= \mu (B(0, u^{-2} R_1)\setminus B(0,  R_1))$
and thus
 $
 \nu = \mu\left(B(0, u^{-2} R_1)\setminus B(0,  R_1)\right)
\sigma^{d-1}_N.
$
We use symmetry to find the spherical cap $C$ determined by
the intersection of $\mathbb{S} (0, R_1)$ with $B(R_1 e_1, H)$,
restricting ourselves to the $x_1 x_2$-plane. Simultaneously
solving
$x_1^2 + x_2^2 = R_1^2$ and $(x_1 - R_1)^2 + x_2^2 = H^2$,
we find that the radial projection of $C$ into $\mathbb{S}^{d -1}$
is   $C(3/8, e_1)$. Now
\begin{equation}\label{halfout1p}
\mu (B(0, R_1)^c\cap B(R_1 e_1,  H))
\end{equation}
\begin{equation}\label{halfout2p}
= \mu (B(0, R_1)^c\cap B(R_1 e_1, H) \cap \{x_1 \le  R_1\})
+
\mu (B(R_1 e_1, H) \cap \{x_1 >  R_1\}).
\end{equation}
By  Lemma \ref{sphericalcaps} with $\cos r = 3/8$ (so $\sin r = \sqrt{55}/8$) and by the choice of $R_1$,
\begin{equation}\label{halfout3p}
\mu (B(0, R_1)^c\cap B(R_1 e_1, H) \cap \{x_1 \le  R_1\})
\le
 \mu (B(0, R_1)) (1+\varepsilon)^2 \left(\frac{64}{55}\right)^{d/3}
 \sigma^{d-1}_N (C(3/8, e_1))
\end{equation}
\begin{equation}\label{halfout4p}
\le
\mu (B(0, R_1))  \left(\frac{55}{64}\right)^{d/6}
\Theta\left(\frac{ 1}{\sqrt{d}}\right).
\end{equation}

Regarding the measure of
$(B (0, R_1)\setminus B(0, \sqrt{2/3} R_1)) \cap B(R_1 e_1, H)$,
this set
is contained in the (positive) cone subtended by the cap $C$
resulting from the intersection
of $\mathbb{S}^{d-1}(0, \sqrt{2/3} R_1)$ with
$B( R_1 e_1, H)$. The said cone is formed by all rays starting at $0$
and crossing $C$.
Let $r$ be the maximal angle
between a vector in
this cap and the $x_1$-axis. We consider
the intersection of $C$ with the
$x_1 x_2$-plane, in order to determine
$s := \cos r$ and
$t := \sin r$.
Solving $(x_1 - R_1)^2 + x_2^2 = H^2$ and
$ x_1^2 + x_2^2 = (\sqrt{2/3} R_1)^2$, we obtain
$t = \sqrt{1077}/(24\sqrt{2})$. Projecting
radially $\mu|_{B(0, R_1)}$  to
$\nu =  \mu (B(0, R_1)) \sigma^{d - 1}_N$
on
$\mathbb{S}^{d-1}$, and likewise projecting radially $C$
onto  $C (s , e_1)$, from Lemma \ref{sphericalcaps} we obtain
\begin{equation}\label{inside2decp}
\mu \left(\left(B (0, R_1)\setminus B(0, u R_1)\right) \cap B(R_1 e_1, H)\right) \le
\nu (C(s, e_1))
\le
 \mu (B(0, R_1)) t^{d}
O \left(\frac{1}{\sqrt{d}}\right).
\end{equation}
The preceding estimates, together with
$t = \sqrt{1077}/(24\sqrt{2})
<  (55/64)^{1/6}$,
entail that
\begin{equation}\label{halfin2ppar2}
\mu (B(R_1 e_1, H) \cap \{x_1 \le  R_1\})
\le
\mu (B(0, R_1))  \left(\frac{55}{64}\right)^{d/6}
\left(\frac{1}{1-\varepsilon} +  \Theta\left(\frac{ 1}{\sqrt{d}}\right)\right),
\end{equation}
and we already know from (\ref{conc})
that $\mu (B(R_1 e_1, H))$ is at most twice as large.
Since by Proposition \ref{dimmeas},
$\mu (B(0, R) )\le
v^{ - d} \mu (B(0, v R))$
 for every $v\in (0,1)$ and every
 $R > 0$, we can apply Lemma \ref{pbound} with
$t_0 = 1$, $R = R_1$ and $v_0 = 1/2$. This yields
\begin{equation}\label{constp}
c_{p,d, \mu} \ge \frac{ \left( 2^{1/p}55^{-1/6}\right)^d}{
\frac{4}{1-\varepsilon} +  \Theta\left(\frac{ 1}{\sqrt{d}}\right)}.
\end{equation}
Setting $2^{1/p}55^{-1/6} = 1$,
we find the solution
$p_0 = (6 \log 2)/\log 55 \approx 1.03782$. Observing that
$c_{p,d,\mu}$ does not depend on our choice of $\varepsilon$,
the result follows by letting $\varepsilon\to 0$.
\end{proof}

\begin{remark} For $p\le 1.03$, $2^{1/p}55^{-1/6} > 1.005$.
Thus, if $d$ is ``high", $c_{p,d,\mu}\ge 1.005^d$. How high
must $d$ be can be explicitly determined from the proof, by
keeping track of the constants in Lemma \ref{sphericalcaps}, instead
of writing $\Theta(1/\sqrt{d})$. Note also that in the specific case $p=1$,
the preceding theorem is more general
and  gives a better
bound (since $55^{-1/6} 2 > (2/\sqrt3)^{1/6}$) than \cite[Theorem 2.3]{A1}, even though $\chi_{B(0, R_1/2)}$ is a very poor choice when $p=1$
(using $\delta_0$ is much more efficient). We shall explore the
case $p=1$ in more detail elsewhere.
\end{remark}

\begin{remark}\label{td} The hypotheses contained in (\ref{hipotesis}) are selected
so that all finite, radial, radially decreasing measures with bounded
densities are included, and still
a concrete range for $p$ is obtained.
Numerically, $t_0:= \frac{6\log 2 - \log55}{3\log 3 - 3\log2}\approx 0.1246$. Provided that the singularity at $0$ is not
too strong, Theorem \ref{theoremdecp} also applies to measures with unbounded densities. In particular, it applies to all measures  defined
via (\ref{defrad}), with $f_t(r) =  r^{- t d} \chi_{(0,1]}(r)$ and
$t \in (0, 1 - t_0]$. This last condition comes from the fact that
for these measures,
$h_u (R) = u^{- (1 - t) d}$ when $R\le 1$.
\end{remark}

For infinite measures, however,
(\ref{hipotesis}) can be rather restrictive. Define  $\mu_{t,d}$ as in the preceding remark but
without truncation, i.e.,
using $f_t (r) =  r^{- t d}$. Then the theorem applies only
to $t= 1- t_0$. Observe, however, that values different
from $t_0$ and $\sqrt{2/3}$  could have been used, with
the same qualitative results.
Thus, a simple way to obtain a theorem covering an infinite subfamily of the measures
$\mu_{t,d}$
is to assume different rates of growth for the $\sup$ and the $\limsup$
in (\ref{hipotesis}).
The proof of the next result is essentially identical to that of Theorem \ref{theoremdecp},  so it will be omitted. We use  $u=\sqrt{2/3}$ to be able to apply the same splitting of the ball centered
at $R_1 e_1$, but other values are possible. Also, the upper bound given below for $t_1$ can be modified,
by suitably choosing a different value for $u$.
Recall that  $f: (0,\infty ) \to [0,\infty )$
is  nonincreasing and that  $\mu$ is
 defined by $f$ via (\ref{defrad}).

\begin{theorem}\label{theoremdecpgen} Fix
$d\in \mathbb{N}\setminus \{0\}$, choose $t_0 \in (0,1)$,
 $t_1\in (0,\log(64/55)/\log(9/4))$,
and set $u = \sqrt{2/3}$.
  Then there exists a $p_0 = p_0 (t_0, t_1) > 1$ with the following
  property: For all
  $p\in [1,p_0)$ we can find a $b(p, t_0, t_1) > 1$, such that for every
 measure $\mu$ satisfying
  $
   \sup_{R > 0} h_u (R) \ge
   u^{-t_0 d}
  $ and
  $  \limsup_{R\to \infty} h_u(R)
  \le u^{-t_1 d},
$
 we have
$
C_{p,d,\mu} \ge c_{p,d,\mu} \ge  \Theta\left(1\right) b(p, t_0, t_1)^{d}.
$
\end{theorem}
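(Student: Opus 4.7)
The plan is to follow the proof of Theorem \ref{theoremdecp} almost verbatim, replacing the symmetric bound $(1\pm\varepsilon)(64/55)^{d/6}$ on $h_u$ by two asymmetric ones: a lower bound $(1-\varepsilon)u^{-t_0 d}$ on $h_u(R_1)$ coming from the sup hypothesis, and upper bounds $(1+\varepsilon)u^{-t_1 d}$ on $h_u(u^{-1}R_1)$ and $h_u(u^{-2}R_1)$ coming from the limsup hypothesis.

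Fix a small $\varepsilon>0$. The set $A := \{R > 0 : h_u(R) \ge (1-\varepsilon)u^{-t_0 d}\}$ is nonempty by the first hypothesis and closed by continuity of $h_u$, while the second hypothesis furnishes a threshold $T_\varepsilon$ beyond which $h_u(R) \le (1+\varepsilon)u^{-t_1 d}$. I would pick $R_1 \in A$ with $u^{-2}R_1 > T_\varepsilon$ when $A$ is unbounded, and $R_1 = \max A$ otherwise, thereby arranging $h_u(R_1) \ge (1-\varepsilon)u^{-t_0 d}$ and $\max\{h_u(u^{-1}R_1),\, h_u(u^{-2}R_1)\} \le (1+\varepsilon)u^{-t_1 d}$. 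Keeping $v = 1/2$, so that $H = R_1\sqrt{5}/2$ and $\sqrt{R_1^2 + H^2} = 3R_1/2 = u^{-2}R_1$, the decomposition of $B(R_1 e_1, H) \cap \{x_1 \le R_1\}$ into the three pieces $B(0, uR_1) \cap B(R_1 e_1, H)$, $B(0, R_1)^c \cap B(R_1 e_1, H) \cap \{x_1 \le R_1\}$, and $(B(0, R_1) \setminus B(0, uR_1)) \cap B(R_1 e_1, H)$ is inherited verbatim.

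Next, each piece is estimated in the style of Theorem \ref{theoremdecp}. The first is at most $(1-\varepsilon)^{-1} u^{t_0 d} \mu(B(0, R_1))$, directly from the $t_0$ hypothesis. The second, handled by pushing the restriction of $\mu$ to the shell $B(0, u^{-2}R_1) \setminus B(0, R_1)$ onto $\mathbb{S}^{d-1}$ and applying Lemma \ref{sphericalcaps} to the cap $C(3/8, e_1)$ (so $\sin r = \sqrt{55}/8$), is at most $\Theta(d^{-1/2})\bigl((3/2)^{t_1}\sqrt{55/64}\bigr)^d \mu(B(0, R_1))$. The third is controlled exactly as in the original by $\Theta(d^{-1/2}) (55/64)^{d/6} \mu(B(0, R_1))$ via $\sin r = \sqrt{1077}/(24\sqrt{2}) < (55/64)^{1/6}$. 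Summing and invoking (\ref{conc}),
\begin{equation*}
\mu(B(R_1 e_1, H)) \le (4+\Theta(d^{-1/2}))\,\mu(B(0, R_1))\, a^d,\qquad a := \max\bigl\{u^{t_0},\,(3/2)^{t_1}\sqrt{55/64},\,(55/64)^{1/6}\bigr\}.
\end{equation*}
The hypothesis $t_0 > 0$ makes the first entry of $a$ strictly below $1$; the hypothesis $t_1 < \log(64/55)/\log(9/4)$ does the same for the second; the third is automatic. So $a < 1$. Applying the second form of Lemma \ref{pbound} with $v_0 = 1/2$ (its internal exponent taken to be $1$, justified by Proposition \ref{dimmeas}) yields $c_{p,d,\mu} \ge \Theta(1)\,(2^{1/q} a)^{-d}$. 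Since $a < 1$ is fixed while $2^{1/q} \to 1$ as $p \to 1^+$, one finds $p_0 = p_0(t_0, t_1) > 1$ so that $2^{1/q} a < 1$ throughout $[1, p_0)$, and sets $b(p, t_0, t_1) := (2^{1/q} a)^{-1} > 1$; letting $\varepsilon\to 0$ completes the proof.

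The delicate part is the selection of $R_1$ in the case that $A$ is bounded and $t_1 < t_0$: then $R_1 = \max A$ need not be large enough for the limsup inequality to be active at $u^{-k}R_1$, and one is forced to replace $u^{-t_1 d}$ by the weaker (sup-based) bound $u^{-t_0 d}$ in controlling the shell, effectively substituting $t_0$ for $t_1$ in the second entry of $a$. Because the conclusion is only qualitative, this loss can be absorbed either by tightening the range of $t_0$ to $t_0 < \log(64/55)/\log(9/4)$, or, as the remark following the theorem notes, by choosing a different reference value $u$ in place of $\sqrt{2/3}$, which reshuffles the numerical thresholds without altering the exponential character of the lower bound.
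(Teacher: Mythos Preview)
Your approach mirrors the paper's: the paper simply says the proof is ``essentially identical to that of Theorem \ref{theoremdecp}'' and omits it, and you have spelled out precisely that adaptation---the same choice $v=1/2$, the same three-piece decomposition of $B(R_1 e_1, H)\cap\{x_1\le R_1\}$, and the same invocation of Lemma \ref{pbound} and Lemma \ref{sphericalcaps}. The computations and the identification of the threshold $t_1<\log(64/55)/\log(9/4)$ are correct.

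The only real gap is in your last paragraph. You correctly spot that when $t_1<t_0$ the set $A$ (defined with threshold $(1-\varepsilon)u^{-t_0 d}$) is necessarily bounded, and then $R_1=\max A$ only delivers $h_u(u^{-j}R_1)<(1-\varepsilon)u^{-t_0 d}$, which need not be below $(1+\varepsilon)u^{-t_1 d}$. But your proposed remedies---restricting the range of $t_0$ or changing $u$---would alter the theorem's hypotheses, so neither actually proves the statement as written. The correct fix is elementary and costs nothing: since $\sup_R h_u(R)\ge u^{-t_0 d}\ge u^{-\min(t_0,t_1)d}$, simply run the whole argument with $t_0':=\min(t_0,t_1)$ in place of $t_0$ when defining $A$. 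Then in the bounded-$A$ case one has $h_u(u^{-j}R_1)<(1-\varepsilon)u^{-t_0' d}\le (1-\varepsilon)u^{-t_1 d}$ automatically (because $t_0'\le t_1$), so the second entry of $a$ is governed by $t_1$ as desired; and the first entry becomes $u^{t_0'}<1$, still strictly below $1$. The resulting $p_0$ and $b$ depend on $p$, $t_0'$ and $t_1$, hence on $p$, $t_0$, $t_1$, exactly as the statement permits.
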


\begin{remark} If $t_0 < t_1$,
then the preceding result
 covers  all the measures $\mu_{t,d}$
defined by $f_t (r) =  r^{- t d}$ such that
$t_0\le 1 - t \le t_1$.
\end{remark}

Returning to Theorem \ref{theoremdecp}, it admits a simpler statement when $f$ is bounded
and $f (x) x^{d-1}\in L^1(0,\infty)$, so $\mu$ is finite.
By Proposition \ref{dimmeas}, the conditions
$\sup_{R > 0} h_u (R) \ge u^{-t_0 d}$ and
$\limsup_{R \to\infty} h_u (R) \le  u^{-t_1 d}$
are then automatically satisfied for all $t_0, t_1, u \in (0,1)$.

\begin{corollary}\label{corodecp} Fix
$d\in \mathbb{N}\setminus \{0\}$.
Suppose $f$ is bounded and $f (x) x^{d-1}\in L^1(0,\infty)$. If $\mu$ is the finite  measure defined via (\ref{defrad}), then
  for every
  \begin{equation}\label{largpbdd}
p \in \left[1, \frac{6\log 2}{\log 55}\right)
\mbox{\ \ \ \ \ we have \ \ \ \ \ }
C_{p,d,\mu} \ge c_{p,d,\mu} \ge  \frac{1}{4 + \Theta\left(\frac{1}{\sqrt{d}}\right)} \left(\frac{2^{1/p}}{55^{1/6}}\right)^{d}.
\end{equation}
\end{corollary}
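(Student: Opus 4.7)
The plan is to reduce the corollary directly to Theorem \ref{theoremdecp} by verifying that the boundedness of $f$ and the finiteness of $\mu$ automatically force condition (\ref{hipotesis}). No new geometric or analytic input is needed; the argument is entirely a bookkeeping exercise using Proposition \ref{dimmeas}.

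First, I would handle the upper bound $\limsup_{R\to\infty} h_u(R) \le (64/55)^{d/6}$. The hypothesis $f(x) x^{d-1}\in L^1(0,\infty)$, together with $f$ nonincreasing, implies that $\mu$ is a finite measure. Hence the last assertion of Proposition \ref{dimmeas} gives $\lim_{R\to\infty} h_u(R)=1$ for $u=\sqrt{2/3}$, which is trivially bounded above by $(64/55)^{d/6}$ for every $d\ge 1$.

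Second, I would verify the lower bound $\sup_{R>0} h_u(R) \ge (64/55)^{d/6}$. Since $f$ is bounded (in addition to being nonincreasing), the middle assertion of Proposition \ref{dimmeas} gives $\sup_{R>0} h_u(R)=u^{-d}=(3/2)^{d/2}$. Thus it suffices to check the dimension-free inequality $(3/2)^{3}\ge 64/55$, i.e.\ $27\cdot 55 \ge 8\cdot 64$, which is $1485\ge 512$. Consequently $(3/2)^{d/2}\ge (64/55)^{d/6}$ for every $d\ge 1$, so the lower bound in (\ref{hipotesis}) holds.

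With both halves of (\ref{hipotesis}) established, I would simply invoke Theorem \ref{theoremdecp} to obtain
\begin{equation*}
C_{p,d,\mu}\ge c_{p,d,\mu}\ge \frac{1}{4+\Theta(1/\sqrt{d})}\left(\frac{2^{1/p}}{55^{1/6}}\right)^{d}
\end{equation*}
for every $p\in [1,(6\log 2)/\log 55)$. There is no real obstacle: the only thing that could go wrong is the numerical comparison $(3/2)^3\ge 64/55$, and it is comfortably satisfied. The substantive work lies entirely in Theorem \ref{theoremdecp} and Proposition \ref{dimmeas}; the corollary is merely the observation that in the bounded-density, finite-measure case, the growth hypotheses are redundant.
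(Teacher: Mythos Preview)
Your proof is correct and follows exactly the paper's approach: the corollary is stated immediately after the remark that, by Proposition \ref{dimmeas}, boundedness of $f$ and finiteness of $\mu$ automatically yield the two inequalities in (\ref{hipotesis}), so Theorem \ref{theoremdecp} applies directly. Your explicit numerical check $(3/2)^3\ge 64/55$ is a harmless elaboration of what the paper leaves implicit.
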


\begin{example}\label{dec} When dealing with  concrete families of measures
it is possible  to obtain  tighter bounds.
We revisit the example from \cite[Remark 2.7]{A1}, adapting
the arguments given there to $p > 1$. Let $\nu_d (A) := \lambda^d (A\cap B^d)$ be Lebesgue
measure restricted to the unit ball.  We apply Lemma \ref{pbound} with $R=1$ and $v=1/2$, so $H = 2^{-1} \sqrt{5}$, and $B^d \cap B(e_1, H)$
is the union of two solid spherical caps, the largest of which
is $B^d \cap \{x_1 \ge 3/8\}$ (since the smaller sphere has larger curvature). Solving $x_1^2 + x_2 ^2 = 1$ and
$(x_1 - 1)^2 + x_2 ^2 = H^2$ we obtain
\begin{equation*}
 \nu_{d} (B( e_1, 2^{-1} \sqrt{5})) \le 2 \lambda^d (B^d \cap \{x_1 \ge
3/8\}) = 2 \lambda^{d-1} (B^{d-1})\int_{3/8}^1\left(\sqrt{1-x_1^2}\right)^{d-1}dx_1
\end{equation*}
\begin{equation*}
 \le \frac{16 \lambda^{d-1} (B^{d-1})}{3} \int_{\arcsin (3/8)}^{\pi/2}\cos^d t \sin t dt =  \frac{ 16}{3( d + 1)}\left( \frac{ \sqrt{55}}{8}\right)^{d+1} \lambda^{d-1} (B^{d-1}).
 \end{equation*}
By Lemma \ref{pbound},
\begin{equation}\label{stein}
 c_{p,d, \nu_d} \ge \left( \frac
{1}{2}\right)^{d/q} \frac{\lambda^{d} (B^{d})}{2 \lambda^{d-1} (B^{d-1})} \frac{3( d + 1)}{ 16} \left( \frac
{8}{\sqrt{55}}\right)^{d + 1} = \Theta (\sqrt{d}) \left( \frac
{ 2^{2 + 1/p}}{\sqrt{55}}\right)^{d}.
\end{equation}
Setting $2^{2 + 1/p} = \sqrt{55}$ and solving for $p$ we obtain that
$c_{p,d}$ grows exponentially fast with $d$
whenever
$$
p < \left(\frac{\log 55}{2 \log 2} - 2\right)^{-1}\approx 1.1227.
$$
\end{example}

\begin{remark} It is possible to present  more involved arguments in
Theorem \ref{theoremdecp} and in Example \ref{dec}, by
 trying to optimize in $v \in (0, 1]$ instead of simply using $v=1/2$. But
 this does not seem to
significantly improve the value of $p$. Specifically , using
$B(e_1, H)$ with $H = \sqrt{1 + v^2}$ in Example \ref{dec}, the same steps
followed above lead us to maximize
$
g(v, q) :=
\frac{2 v^{1/q}}{(3 + 2 v^2-v^4)^{1/2}}.
$
The particular choice $v = 1/2$ yielded the critical value
$p_0 \approx 1.1227$ and its conjugate exponent $q_0 \approx 9.1474$.
Now it is elementary to check  that for all $q \le 9$
and all $v\in [0,1)$,
$
g(v,q) < 1.
$
Thus,  with the methods of the present paper we cannot get
exponential increase in Example \ref{dec} for any $p\ge  9/8 = 1.125$,
which is very close to  $p_0 \approx 1.1227$. Even the general bound
$p_0 \approx 1.0378$
from Theorem  \ref{theoremdecp}  is not far from $1.125$.

 We mention
that although for small values of $q$, say, $q\approx 10$ it is better to
consider as our $L^p$ function $\chi_{B(0,v)}$ with $v\approx 1/2$,
in order to maximize $g(v,q)$
as $q\to\infty$, we must let $v\to0$. Of course, at the endpoint value $p=1$,
the Dirac delta measure $\delta_0$ is a better choice than all the functions
$\chi_{B(0,v)}$, $v\in (0,1)$.
\end{remark}

In general, good {\em upper} bounds for $c_{p,d}$ and $C_{p,d}$ are easier
to establish when $\mu$ is {\em doubling}, that is, when there exists
an absolute constant $C$ such that for all $x\in \mathbb{R}^d$
and all $R > 0$, $\mu (B(x, 2 R)) \le C \mu (B(x, R))$. The doubling
condition captures the property of Lebesgue measure that yields
weak type bounds via covering lemmas of Vitali type. It might
be
expected that arbitrary doubling measures would behave like Lebesgue measure, but in our
context this is not the case:
There is a collection of doubling measures for which exponential increase
holds even when $p$ is high. For all $t\in (0,1)$, let $\mu_{t,d}$ be defined on $\mathbb{R}^d$ by $d\mu_{t,d}:= \|x\|_2^{-td} dx$, and consider
the families $\mathcal{M}_t:= \{\mu_{t,d}: d\in \mathbb{N}\setminus \{0\}\}$.
It is well known that the measures $\mu_{t,d}$ are indeed doubling, cf. for instance \cite[2.7, p. 12]{St3}.  For simplicity, instead of
$C_{p,d,\mu_{t,d}}$ and  $c_{p,d,\mu_{t,d}}$ we shall
write $C_{p,d, t}$ and  $c_{p,d,t}$ to denote the best strong type and weak type $(p,p)$ constants for the measures
in $\mathcal{M}_t$.

\begin{theorem}\label{theoremdoubling} Fix
$p_0\in [1,\infty)$. Then there exist constants $t_0 = t_0(p_0)\in (0,1)$ and
 $b_0 = b_0(p_0) > 1$ such that for all $p\in [1,p_0]$ and all
$t\in [t_0, 1)$, we have
$
C_{p,d, t} \ge c_{p,d,t}\ge b_0^{(1 -t)d}/6.$
\end{theorem}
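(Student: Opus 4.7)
The plan is to apply Lemma \ref{pbound} to the measures $\mu_{t,d}$ with $R = 1$ and $v \in (0,1)$ a parameter I will eventually set to $v = 1/2$. Polar integration against $\|y\|_2^{-td}$ yields $\mu_{t,d}(B(0,r)) = \sigma^{d-1}(\mathbb S^{d-1})\, r^{(1-t)d}/((1-t)d)$ for every $r > 0$; writing $\alpha := (1-t)d$ and $C := \sigma^{d-1}(\mathbb S^{d-1})/\alpha$, the numerator of (\ref{lowbouwpgen}) becomes simply $C v^{\alpha/q}$. The whole problem then reduces to a sharp upper bound on $\mu_{t,d}(B(e_1,H))$ with $H = \sqrt{1+v^2}$; this is where I expect the main obstacle.

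To estimate $\mu_{t,d}(B(e_1,H))$, I work in polar coordinates with polar axis $e_1$. The ball $B(e_1,H)$ becomes $\{0 \le r \le r_+(\theta)\}$, where $r_+(\theta) = \cos\theta + \sqrt{\cos^2\theta + v^2}$ is the positive root of $r^2 - 2r\cos\theta - v^2 = 0$. The substitution $\cos\theta = v\sinh h$ gives the closed form $r_+(\theta) = v\exp(\operatorname{arcsinh}(\cos\theta/v))$, and combining this with the elementary inequality $\operatorname{arcsinh}(x) \le \max(x,0)$ yields the pointwise bound $r_+(\theta)^\alpha \le v^\alpha\exp(\alpha(\cos\theta)^+/v)$. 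Inserting this into the polar formula for $\mu_{t,d}(B(e_1,H))$, splitting the $\theta$-integral at $\pi/2$ and using symmetry of $\sin^{d-2}\theta$ about $\pi/2$, everything reduces to controlling $\int_0^{\pi/2}\sin^{d-2}\theta\,e^{\alpha\cos\theta/v}d\theta$. Substituting $\eta = \cos\theta$ and using $(1-\eta^2)^{(d-3)/2} \le e^{-(d-3)\eta^2/2}$ converts this to a Gaussian integral bounded by $\sqrt{2\pi/(d-3)}\exp(\alpha^2/(2(d-3)v^2))$. Combined with the lower bound $\sigma^{d-1}/\sigma^{d-2} = \int_0^\pi \sin^{d-2}\theta\,d\theta \ge \sqrt{2\pi/(d-1)}$ (which comes out of (\ref{ratio})), Lemma \ref{pbound} yields
\[
c_{p,d,t} \ge \frac{1}{4}\sqrt{\frac{d-3}{d-1}}\,v^{-\alpha/p}\exp\left(-\frac{\alpha^2}{2(d-3)v^2}\right),
\]
which is at least $(1/6)\,v^{-\alpha/p}\exp(-\alpha^2/(2(d-3)v^2))$ once $d \ge 5$.

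Fixing $v = 1/2$, the exponent equals $(1-t)d\log 2/p - 2(1-t)^2 d^2/(d-3)$; for $d \ge 6$ the ratio $d^2/(d-3)$ is at most $d + 6$, so the exponent is bounded below by $(1-t)d[\log 2/p - 4(1-t)]$ up to a bounded additive correction. Choosing $t_0 := 1 - \log 2/(8p_0)$ and $b_0 := 2^{1/(4p_0)}$, one verifies that for every $p \in [1,p_0]$ and every $t \in [t_0,1)$ the quantity $\log 2/p - 4(1-t) - \log b_0$ is at least $\log 2/(4p_0) > 0$, which yields $c_{p,d,t} \ge b_0^{(1-t)d}/6$ for all $d \ge 6$. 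The remaining cases ($d \le 5$, or more generally $(1-t)d \le \log 6/\log b_0$) are trivial, because $c_{p,d,t} \ge 1$ for every nontrivial measure and every $p\ge 1$ (test $g = \chi_E$ in the weak inequality, using $M_\mu g \ge g$ a.e.), so that $b_0^{(1-t)d}/6 \le 1 \le c_{p,d,t}$ automatically in that range. The decisive ingredient is the arcsinh identity for $r_+(\theta)$: it is exactly what lets the Gaussian estimate on the $\theta$-integral produce a factor of the shape $\exp(\alpha^2/(dv^2))$, which scales as $\exp((1-t)^2 d)$ rather than the much larger $\exp((1-t)d)$ that a direct triangle-inequality estimate would produce, thereby preserving the exponential rate $b_0^{(1-t)d}$.
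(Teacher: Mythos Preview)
Your argument is correct and takes a genuinely different route from the paper's.  The paper proceeds geometrically: it fixes $v=1/2$, picks an auxiliary $c\in(1,2^{1/p_0})$, splits $B(e_1,H)$ into the three shells $B(0,c/2)$, $B(0,1)\setminus B(0,c/2)$, and $B(0,1)^c$, and controls the two outer shells by radial projection onto spherical caps via Lemma~\ref{sphericalcaps}; the point is that as $t\to 1$ the cap contributions (which carry factors like $x_2(c)^{d}$ and $(\sqrt{55}/8)^{d}$) become negligible compared with $\mu_{t,d}(B(0,c/2))$, so $\mu_{t,d}(B(e_1,H))\le 3\mu_{t,d}(B(0,c/2))$ and Lemma~\ref{pbound} finishes.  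You instead integrate directly in polar coordinates, exploit the exact identity $r_+(\theta)=v\exp(\operatorname{arcsinh}(\cos\theta/v))$ together with $\operatorname{arcsinh}(x)\le x^+$, and reduce the angular integral to a Gaussian via $(1-\eta^2)^{(d-3)/2}\le e^{-(d-3)\eta^2/2}$.  The payoff is an explicit penalty $\exp(2(1-t)^2 d^2/(d-3))$, whose exponent is quadratic in $1-t$ and hence is dominated by the gain $2^{(1-t)d/p}$ once $1-t$ is small; this yields fully explicit constants $t_0=1-\log 2/(8p_0)$ and $b_0=2^{1/(4p_0)}$, which the paper's argument does not provide.  (There is a harmless slip: after bounding $d^2/(d-3)\le d+6$ you write $4(1-t)$ where the computation actually gives $2(1-t)$, but your choice of $t_0$ absorbs either coefficient.)  The paper's approach, on the other hand, stays within the toolkit developed earlier (spherical caps), making the argument uniform with Theorem~\ref{theoremdecp}, and avoids the separate small-$d$ analysis by folding it into the definition of $b_0$.
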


\begin{proof}  Assume that
$d \ge 2$. We use $\chi_{B(0, 1/2)}$ as our $L^p$ function,
taking  $R=1$ and  $H = 2^{-1}\sqrt{5}$ in Lemma \ref{pbound}.
Given $p_0$, we select a fixed  $c\in (1,2^{1/p_0})$, so  $2^{1/p_0}/c > 1$, and
split $B(e_1, H)$
 into the three sets
$B(0, c/2)\cap B(e_1, H)$,
$\left(B(0, 1)\setminus  B(0, c/2)\right)\cap B(e_1, H)$,
and $B(0, 1)^c\cap B(e_1, H)$. We shall select $t_0 = t_0 (c) < 1$ so
close to $1$ that for all $t \in [t_0, 1)$ the dominant term  will be
\begin{equation}\label{inside1doub}
\mu_{t,d} (B(0, c/2)\cap B(e_1, H))
\le
\mu_{t,d} (B(0, c/2))
=
\frac{\sigma^{d-1} (\mathbb{S}^{d -1}) \left(\frac{c}{2}\right)^{(1- t)d}}{(1- t)d},
\end{equation}
where the last equality is obtained by integrating in polar coordinates.
Solving $x_1^2 + x_2^2 = c^2/4$ and
$(x_1 - 1)^2 + x_2^2 = 5/4$, we find
that the middle section
$\left(B(0, 1)\setminus  B(0, c/2)\right)\cap B(e_1, H)$ is contained in the cone subtended by the cap
$C((c^2 - 1)/8,  c e_1/2)$, or, if we radially project this cap
into the unit sphere, by $C((c^2 - 1)/(4 c),  e_1)$. The radius of the smallest ball containing this cap is $x_2(c) = (4 c)^{-1} \sqrt{18 c^2 - c^4 - 1}$. Since $x_2(1) = 1$ and
$x_2$  is
strictly decreasing  on $[1,2]$
 (as can be seen from its geometric meaning or
by differentiating)  $x_2(c) < 1$
on $(1,2]$.
By
Lemma \ref{sphericalcaps},  $\sigma^{d-1}_N (C((c^2 - 1)/(4 c),  e_1)) = x_2 (c)^d /\Theta (\sqrt{d})$ and
\begin{equation}\label{middoub}
\mu_{t,d} \left(\left(B (0, 1)\setminus B(0, c/2)\right) \cap B( e_1, H)\right)
\le
\frac{\left(x_2 (c)^{1/(1-t)}\right)^{(1 -t) d}}{\Theta (\sqrt{d})} \frac{\sigma^{d-1} (\mathbb{S}^{d-1})}{(1 - t) d}.
\end{equation}
Solving
$x_1^1 + x_2^2 = 1$ and
$(x_1 - 1)^2 + x_2^2 = 5/4$ shows that $B(0, 1)^c\cap B(e_1, H)$ is contained
in the cone subtended by the cap $C(3/8, e_1)$.
The distance from
the boundary of this cap to the $x_1$ axis is $x_2 = \sqrt{55}/8$.
Thus,  Lemma \ref{sphericalcaps}, together with integration in polar
coordinates from $1$ to $1 + 2^{-1}\sqrt{5}$ in the radial variable
and over the said cap in the angular variable,
 yield
\begin{equation}\label{outdoub}
\mu_{t,d} (B(0, 1)^c\cap B(e_1,  H)) \le
\left(\left( \frac{ \sqrt{55}}{8}\right)^{1/(1-t)} \left(1 + 2^{-1}\sqrt{5}\right)\right)^{(1 -t) d} \frac{\sigma^{d-1} (\mathbb{S}^{d-1})}{(1 - t) d \Theta (\sqrt{d})}.
\end{equation}
Select $d_0=d_0(c)$ such that  if $d\ge d_0$, the expressions $1/\Theta (\sqrt{d})$ in (\ref{middoub}) and (\ref{outdoub})
coming from Lemma \ref{sphericalcaps} are  bounded  above by $1$.
As $t\to1$, both $x_2 (c)^{1/(1-t)}\to 0$ and $\left(\sqrt{55}/8\right)^{1/(1-t)}\to
0$, so by choosing $t_0$ close enough to $1$,
we can make the term in (\ref{inside1doub}) larger than those in
(\ref{middoub}) and (\ref{outdoub})
for every $d \ge d_0$ and all $t\in [t_0, 1)$. Hence,
\begin{equation}\label{totalH}
\mu_{t,d} (B(e_1, H))
\le
 3 \mu_{t,d} (B(0, c/2)) =
\frac{3 \sigma^{d-1} (\mathbb{S}^{d -1}) c^{(1- t)d}}{(1- t)d 2^{(1- t)d}}.
 \end{equation}
By Lemma \ref{pbound}
we obtain
$
c_{p,d, t}
\ge
\frac{1}{6} \left(\frac{1}{2}\right)^{(1-t) d/q}
\left(\frac{2}{c}\right)^{(1- t)d}
=
\frac{1}{6}
\left(\frac{2^{1/p}}{c}\right)^{(1- t)d}
\ge
\frac{1}{6}
\left(\frac{2^{1/p_0}}{c}\right)^{(1- t)d}
$
for every $p\in [1,p_0]$. Finally, by the choice of $c$, the inequality $b_0:=\min\{6^{1/d_0},c^{-1} 2^{1/p_0}\}>1$ holds, so
 we have exponential increase of the best constants in $d$ (the term $6^{1/d_0}$ has been included
 to account for small values of $d$).
\end{proof}

\begin{example}\label{radial} Define $\mu_v$ on $\mathbb{R}^d$ as the sum of area measure
on $\mathbb{S}$ plus Lebesgue
on $B^d(0,v)$. If $v=1$, then the arguments used in this paper apply and we do get exponential growth of $c_{p,d}$ for sufficiently
 small values of $p > 1$.  However, suppose we let $v \to 0$;
taking $\chi_{B(0,r)}$ as our $L^p$ function, we see that having
$v < r < 1$ offers no advantage over $0 < r \le v$, so
$r\to0$ as $v \to 0$.
This forces us to let $q = q(r) \to \infty$ in
 Lemma \ref{pbound}, and we do not obtain a uniform value of $p$ for this
 family.
While the measures $\mu_v$ are not absolutely continuous,
by taking $f_v = \chi_{[0,v]} + \chi_{[1 - 1/d,1]}$ we observe
the same phenomenon for densities. Thus,  additional hypotheses
are needed in order to go
beyond  radially decreasing densities.
\end{example}

\end{document}